\newtheorem{theorem}{Theorem}[section]
\newtheorem{lemma}[theorem]{Lemma}
\newtheorem{corollary}[theorem]{Corollary}
\newtheorem{observation}[theorem]{Observation}
\newtheorem{claim}[theorem]{Claim}
\newtheorem{definition}[theorem]{Definition}
\newcommand\cref[1]{Corollary~\ref{cor:#1}}
\newcommand\oref[1]{Observation~\ref{obs:#1}}
\begin{document}

\title{Two-coloring triples such that in each color class every element is missed at least once}

\author{Bal\'azs Keszegh\thanks{Alfr\'ed R{\'e}nyi Institute of Mathematics and MTA-ELTE Lend\"ulet Combinatorial Geometry Research Group. Research supported by the Lend\"ulet program of the Hungarian Academy of Sciences, under the grant LP2017-19/2017 and by the National Research, Development and Innovation Office -- NKFIH under the grant K 132696.}}

\maketitle
\begin{abstract}
We give a characterization of finite sets of triples of elements (e.g., positive integers) that can be colored with two colors such that for every element $i$ in each color class there exists a triple which does not contain $i$. We give a linear (in the number of triples) time algorithm to decide if such a coloring exists and find one if it does.

We also consider generalizations of this result and an application to a matching problem, which motivated this study. Finally, we show how these results translate to results about colorings of hypergraphs in which the degree of every vertex is $k$ less than the number of hyperedges.
\end{abstract}

\section{Introduction}

For positive integers $k$ and $n$ we are given a finite multiset of $n$ many $k$-tuples\footnote{A $k$-tuple simply denotes a set of size $k$, .e.g., $\{1,2,3\}$ is a $3$-tuple, also referred to as a triple. Note that repetition of elements is not allowed and the order of the elements does not matter.} of characters from an alphabet such that every triple consists of three different characters. From now on a `set of $k$-tuples' always refers to such a finite multiset of $k$-tuples. We also refer to the characters as elements.

Two sets of $k$-tuples is said to be \emph{equivalent} if there is a bijection between their alphabets which induces a bijection between the two sets of $k$-tuples. We do not want to distinguish equivalent sets of $k$-tuples, thus without loss of generality we can assume that the elements are positive integers from $[m]=\{1,\dots,m\}$ for some $m$ and each of these $m$ numbers is present in at least one $k$-tuple. If a $k$-tuple does not contain the element $i$ we say that the $k$-tuple \textit{avoids} $i$ (e.g., $\{1,2,3\}$ avoids $4$ but does not avoid $2$). 

A $c$-coloring\footnote{A $c$-coloring of a set is a mapping from this set to a set of $c$ colors (which may, e.g., be denoted by names like red and blue or by the numbers from $[c]$).} of a set of tuples (of numbers from $[m]$)
is a \textit{nice $c$-coloring} if for each of the $c$ colors and for every $i\in [m]$ there is a tuple of that color that avoids $i$. Similarly, a partial $c$-coloring of a set of tuples (that is, not all tuples need to be colored) is a \textit{nice partial $c$-coloring} if for each of the $c$ colors and for every $i\in [m]$ there is a tuple of that color that avoids $i$.
Notice that a nice partial $c$-coloring can always be extended to a nice $c$-coloring by coloring arbitrarily all the tuples that are uncolored in the partial $c$-coloring.

We are in particular interested in nice two-colorings of triples, that is, our aim is to two-color (with colors red and blue) a set of triples such that for each of the two colors and for every $i\in [m]$ there is a triple of that color that avoids $i$. Our main result is a characterization of the sets of triples that admit a nice (partial) $2$-coloring. 
Section \ref{sec:main} contains the characterization, Section \ref{sec:proof} its proof, while in Section \ref{sec:partial} we give a linear (in the number of triples) time algorithm for finding such a coloring if it exists. We further extend this result, and (without having a characterization) we give an algorithm for finding a nice $c$-coloring for every $c$ and $k$ which runs in linear time (in the number of $k$-tuples).
Section \ref{sec:partial} also considers the existence of nice partial colorings that color only a small number of the $k$-tuples. This research is originally motivated by a real life scheduling problem which can be phrased as a matching problem, this connection is discussed in Section \ref{sec:matching}.

\section{Main results}\label{sec:main}

We are mainly interested in characterizing sets of $k$-tuples that admit a nice $c$-coloring for different values of $c$ and $k$ (even more generally we could have non-uniformly sized tuples). Furthermore, we want efficient algorithms to decide if such a $c$-coloring exists and if yes then find it. 

Irrespective of the size of the tuples, for every $c$ a trivial necessary condition for having a nice $c$-coloring is that all elements must be missed from at least $c$ many tuples, that is, the set of tuples is $c$-fair:

\begin{definition}
	A set of tuples (on elements from $[m]$) is \emph{$c$-fair} if each element (of $[m]$) is missed from at least $c$ many tuples.
\end{definition}

The case $c=1$ is trivial, in this case a set of tuples admits a nice $1$-coloring if and only if every element is missed from at least one tuple (i.e., the set of tuples is $1$-fair).

The case $c=2$ and $k=3$ is already non-trivial. For the existence of a nice two-coloring of the triples it is again a trivial necessary condition that every $i\in [m]$ is avoided by at least two triples (i.e., the set of triples is $2$-fair). During the $9$th Emléktábla Workshop Cechl\'arov\'a \cite{perscomm} asked what are the sufficient conditions for the existence of a nice two-coloring.

For brevity a triple $\{x,y,z\}$ is abbreviated as $xyz$ when it does not lead to confusion (e.g., $\{1,2,3\}$ is written simply as $123$). We explicitly define again the $2$-fair property for a set of triples:

\begin{definition}
	Let $T$ be a set of $n$ triples (of positive integers). The triples containing $i$ are denoted by $T_i$. $T$ is \emph{fair} if for every $i$ there are two triples that are not in $T_i$ (i..e, $T$ is $2$-fair).
\end{definition}

Given a not nice two-coloring, we say that a number $i$ makes it not nice if the triples in one of the color classes all contain $i$.

\begin{definition}	
A set of $n$ triples is called \emph{special} if and only if it contains triples of the following form: $n-3$ copies of the triple $123$ plus three more triples, $1**,2**$ and $3**$, where the $*$'s denote arbitrary numbers different from $1,2,3$. A set of $n$ triples which is not special is called \emph{non-special}.
\end{definition}

Observe that a special set of $n$ triples does not admit a nice two-coloring. 

For $n\le 3$ no set of $n$ triples can have a nice two-coloring as one of the color classes contains at most one triple.

Clearly, for $n\ge 4$ being fair and non-special are both necessary conditions for a set of triples to admit a nice two-coloring. We prove that for $n\ge 6$ these conditions are also sufficient. This was conjectured by Salia (for $n\ge 8$) \cite{perscomm}.

We remark that for $n=4,5$ there exist fair non-special sets of triples that nevertheless do not admit a nice two-coloring. E.g., for $n=4$ the set of triples $\{123,145,245,678\}$ and for $n=5$ the set of triples $\{123,124,134,234,567\}$. As there are only finite many triples for $n=4,5$, we omit to list all which admit a nice two-coloring.

\begin{theorem}\label{thm:main}
A set of $n\ge 6$ triples admits a nice two-coloring if and only if it is fair and non-special.
\end{theorem}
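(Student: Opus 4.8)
The necessity of both conditions is already recorded above, so the content is the converse: every fair, non-special set of $n\ge 6$ triples admits a nice two-coloring. I would first reformulate niceness. Writing $A_i:=T\setminus T_i$ for the set of triples that avoid $i$, a two-coloring fails to be nice exactly when some $i$ makes it not nice, i.e.\ when one color class lies entirely inside $T_i$, i.e.\ when $A_i$ is monochromatic. Hence a two-coloring is nice if and only if every $A_i$ receives both colors, so the task is precisely to properly two-color the set system $\{A_i:i\in[m]\}$ on the ground set $T$. Fairness says exactly that $|A_i|\ge 2$ for all $i$, so every constraint is a genuine hyperedge.

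The rigid constraints come from the \emph{critical} elements, those with $|A_i|=2$: such an $A_i=\{p,q\}$ forces the triples $p$ and $q$ into different classes. I would record these as edges of an auxiliary \emph{forcing graph} $H$ on the vertex set $T$. Any nice coloring must properly two-color $H$, and conversely I aim to complete a proper two-coloring of $H$ to a nice one. A critical $i$ has $|T_i|=n-2$, and since $\sum_i|T_i|=3n$ there are at most $3n/(n-2)$ critical elements — at most four, and at most three once $n\ge 9$ — so $H$ has at most four edges, and at most eight triples are constrained by it. The plan is to prove the dichotomy that $H$ is bipartite if and only if $T$ is non-special.

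The heart of the argument is this structural dichotomy, which I would establish by a degree count. Suppose there are three critical elements $i,j,l$; then $|T_i|+|T_j|+|T_l|=3n-6$ accounts for all but six of the $3n$ element-incidences, so every triple other than the common triple $ijl$ must use one of these six remaining (non-critical) incidences, giving at most six triples different from $ijl$, each lying in $A_i\cup A_j\cup A_l$. A triangle in $H$ means there are triples $P,Q,R$ with $A_i=\{Q,R\}$, $A_j=\{P,R\}$, $A_l=\{P,Q\}$; unwinding these memberships forces $P$ to consist of $i$ and two non-critical elements, and likewise $Q$ of $j$ and $R$ of $l$, and shows there can be no further triples — that is, $T$ is exactly special, and conversely the special configuration produces precisely this triangle. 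Since a special set has only three critical elements, the existence of a fourth critical element is incompatible with a triangle; and as an odd cycle needs at least five edges once it is longer than a triangle, a graph with at most four edges is non-bipartite precisely when it contains a triangle. Hence for $n\ge 6$ the graph $H$ is non-bipartite exactly when $T$ is special. Making this counting, and the small cases $n\in\{6,7,8\}$ where a fourth critical element is still possible, completely tight is, I expect, the main obstacle.

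It then remains to complete a proper two-coloring of the bipartite $H$ into a nice coloring. I would color the at most eight constrained triples according to a bipartition of $H$ and then color the $\ge n-8$ free triples, using them and the freedom to flip connected components of $H$ to repair any non-critical $A_i$ (those with $|A_i|\ge 3$) that remains monochromatic; because such an $A_i$ has at least three triples while $H$ is sparse and $n\ge 6$ supplies ample uncommitted triples, this repair can be carried out without violating the critical constraints, though verifying it cleanly is a bounded, routine case check given the extremely simple shape of $H$. This yields a nice two-coloring, and since locating the critical elements, building and two-coloring $H$, and performing the local repairs are all linear in $n$, the same construction supplies the promised linear-time algorithm.
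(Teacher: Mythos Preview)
Your reformulation via the sets $A_i$ and the forcing-graph dichotomy are correct and give a clean structural picture: the argument that a triangle in $H$ forces $T$ to be special is right, and with at most four critical elements (hence at most four edges) and no triangle, $H$ is indeed bipartite whenever $T$ is non-special.

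The genuine gap is the repair step. You label it ``a bounded, routine case check,'' but this is where essentially all the content of the theorem lives. For $n\in\{6,7,8\}$ your own bound gives $n-8\le 0$ free triples, so it may happen that every triple is constrained and the only remaining freedom is flipping the (at most four) components of $H$; you give no argument that among these finitely many proper $2$-colorings of $H$ one always makes every non-critical $A_i$ bichromatic. More to the point, bipartiteness of the size-$2$ hyperedges of a hypergraph does not in general imply proper $2$-colorability once larger hyperedges are present, so the specific structure here---that each triple lies in all but exactly three of the $A_i$'s---must be exploited, and you never invoke it in the repair. This is precisely the work the paper does: Lemma~\ref{lem:tight} treats the situation with three critical elements (your $H$ with at least three edges) by a case analysis on the overlaps $T_i\cap T_j$, and Lemma~\ref{lem:six} handles all of $n=6$ by a separate counting argument over balanced colorings. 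What you defer as routine is, in effect, the substance of these two lemmas.

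The paper's organization is also different from yours. Rather than analyzing $H$ directly, it repeatedly deletes a triple while the set stays fair, stopping either at six triples (handled by Lemma~\ref{lem:six}) or at an irreducible set; irreducibility is then shown in a few lines to force three elements of multiplicity $n-2$, landing in Lemma~\ref{lem:tight}. A short extra argument covers the possibility that the reduced set becomes special along the way. So the paper front-loads a reduction and then does the casework on a bounded instance, whereas you front-load the structural dichotomy and postpone the casework; neither route avoids it. Your triangle/special dichotomy is a pleasant statement in its own right and would make a good organizing lemma, but completing your plan would still require arguments of length comparable to Lemmas~\ref{lem:tight} and~\ref{lem:six}.
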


Furthermore, we show a linear (in $n$) time algorithm for any $c$ and $k$:

\begin{theorem}\label{thm:linearck}
	For any fixed $c,k$, given a set of $n$ many $k$-tuples, there is an $O(n)$ time algorithm to check if a nice $c$-coloring exists which also finds one if it exists (the dependence on $c$ and $k$ is hidden in the $O$ notation).
\end{theorem}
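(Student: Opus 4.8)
The plan is to first restate niceness so as to eliminate the universal quantifier over the (possibly large) element set $[m]$. Observe that a $c$-coloring is nice if and only if each of the $c$ color classes is nonempty and has \emph{empty common intersection}: the requirement ``for every $i$ some tuple of color $j$ avoids $i$'' says exactly that no element lies in all tuples of color $j$, i.e. $\bigcap_{t\in\text{class }j} t=\emptyset$ (an empty class violates niceness). Next I would record the kernelization fact that a family of $k$-sets has empty intersection if and only if some subfamily of at most $k+1$ of its members already does: starting from any member $t$ and, for each $a\in t$, adding one member avoiding $a$, produces at most $k+1$ tuples whose intersection lies in $t$ yet meets no $a\in t$. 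Hence a nice $c$-coloring exists if and only if one can choose $c$ pairwise \emph{disjoint} witness subfamilies $W_1,\dots,W_c$, each of size at most $k+1$ and each with empty intersection; given these, color $W_j$ with color $j$ and distribute the remaining tuples arbitrarily (adding tuples to a class only shrinks its intersection, so it stays empty). This already reduces the task to selecting a \emph{constant} number, at most $c(k+1)$, of tuples.

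Second, I would localize the difficulty to a constant-size \emph{core}. Write $d_i$ for the number of tuples containing $i$, call $i$ \emph{heavy} if fewer than $B:=2c(k+1)$ tuples avoid it and \emph{light} otherwise. Since $\sum_i d_i=nk$, once $n\ge 2B$ every heavy $i$ has $d_i>n-B\ge n/2$, so there are at most $2k$ heavy elements. Light elements are never a bottleneck: each is avoided by at least $B$ tuples, whereas all witnesses together use at most $c(k+1)<B$ tuples, so a previously unused light-avoider can always be grabbed greedily. Thus the only genuine constraints come from the scarce avoiders of heavy elements. Let the core be the set of all tuples avoiding at least one heavy element; it has size at most $2kB$, a constant depending only on $c$ and $k$. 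Every non-core tuple contains all heavy elements, hence is useless for emptying the heavy part of an intersection but plentiful as filler.

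Third, the residual decision is then of constant size. A class has empty intersection if and only if its heavy-intersection is empty (achievable only with core tuples) and its light-intersection is empty (always completable from fresh light-avoiders). So, assuming the input is $c$-fair and $n\ge 2B$, a nice $c$-coloring exists if and only if one can split a subset of the core into $c$ groups so that each group contains, for every heavy element $i$, at least one tuple avoiding $i$. For the forward direction, read off inside each color class one core avoider per heavy element; these remain disjoint across classes. For the backward direction, complete each group to an empty-intersection witness by adding at most $k+1$ fresh tuples as in the kernelization construction, and assign the leftover tuples arbitrarily. Since the core and the heavy ground set both have constant size, this splitting question is decided in $O(1)$ time by brute force (or a bounded matching). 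The remaining work is pure bookkeeping: compute all degrees, extract the heavy elements together with their avoider sets, then place the $O(1)$ witness tuples and all leftovers — each a single linear scan, $O(n)$ in total, after relabeling the $O(n)$ element-occurrences into $[m]$ by radix/bucket sort in the standard RAM model. The regime $n<2B$ is a constant-size instance, handled by trying all $c^{\,n}=O(1)$ colorings.

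The main obstacle is the second step: proving that the problem truly collapses onto a constant-size core. This rests on two quantitative points — that heavy elements are few (the degree-sum bound, which holds only once $n$ exceeds a threshold in $c,k$, forcing the small-$n$ case to be split off) and that light elements can never obstruct (the witness-size bound $c(k+1)<B$, which must be reconciled with disjointness so that a fresh avoider always survives). Fixing the constants $B$ and $n_0=2B$ consistently, and verifying that the heavy-core splitting condition is \emph{exactly} equivalent to the existence of a nice coloring in both directions, is the delicate part; everything afterward is routine linear-time bookkeeping.
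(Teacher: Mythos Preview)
Your approach is correct and genuinely different from the paper's. You kernelize by isolating \emph{heavy} elements (those with fewer than $B$ avoiders), bounding their number via the degree sum, collecting their avoiders into a constant-size \emph{core}, and reducing niceness to a disjoint-splitting condition on that core, with light elements handled greedily afterwards. The paper instead fixes an arbitrary set $S$ of $(k+1)(c-1)+1$ tuples, argues (via Observation~\ref{obs:partialize}) that any nice partial coloring can be normalized so that every color appears on $S$, and then \emph{merges} every element not occurring in $S$ into a single dummy symbol $*$; this collapses the alphabet to constant size, and retaining at most $c$ copies of each resulting tuple-type yields a constant-size instance $R$ equivalent to the original. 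Both routes produce an $O(1)$-size kernel, but the paper's element-merging trick sidesteps the heavy/light dichotomy and the greedy freshness argument entirely, while your route exposes a more structural obstruction (the bipartite system of heavy elements versus their scarce avoiders). One point you should tighten: in the backward direction, make explicit that the witness $W_j$ you build is the kernelized set of size at most $k+1$ (one $t_j\in G_j$, heavy-avoiders drawn from $G_j$, light-avoiders drawn fresh), rather than all of $G_j$ together with the fresh tuples; otherwise the groups $G_j$ themselves could have size up to $2k$ each and your count ``all witnesses together use at most $c(k+1)<B$'' would not apply with your chosen $B=2c(k+1)$.
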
 

The variants of Theorem \ref{thm:main} and Claim \ref{claim:linear} about partial colorings are stated in Section \ref{sec:partial}.

\smallskip
Graph coloring is a recurring tool in (sport) event scheduling (e.g., \cite{LT,JUBW}). The original motivation of our research is also a (real life) event scheduling problem which can be phrased as a certain matching problem which in turn can be solved using our coloring results. This connection is discussed in detail in Section \ref{sec:matching}.

\subsection{Consequences about coloring hypergraphs}
\smallskip
To put our results in additional context, we phrase our results also as statements about proper and polychromatic coloring certain hypergraphs. A coloring of the vertices of a hypergraph is \emph{proper} if no hyperedge is monochromatic. A $c$-coloring of the vertices is \emph{polychromatic} if every hyperedge contains a vertex with each of the $c$ colors. Notice that for $c=2$ a coloring is proper if and only if it is  polychromatic but for $c\ne 2$ the two conditions differ. 

Given a set $T$ of $n$ triples with elements from $[m]$, let $H_T$ be the multi-hypergraph whose vertices correspond to the triples and for each $i\in [m]$ there is a hyperedge $e_i$ containing exactly those vertices for which the corresponding triple \textit{does not contain} $i$. Note that every vertex is contained in exactly $m-3$ hyperedges. It is easy to see that this mapping $T\rightarrow H_T$ from sets of $n$ triples on elements from $[m]$, to multi-hypergraphs with $n$ vertices and $m$ hyperedges that have all degrees equal to $m-3$, is in fact a bijection. It is also easy to see that a nice two-coloring of the triples of $T$ corresponds to a proper two-coloring of the vertices of $H_T$. With this notation Theorem \ref{thm:main} is equivalent to the following statement:

\begin{theorem}\label{thm:mainhg}
	Given a multi-hypergraph $H$ with $n$ vertices and $m$ hyperedges such that every vertex has degree $m-3$, $H$ admits a proper two-coloring if and only if every hyperedge of $H$ has size at least $2$ and $H$ is triangle-free\footnote{A hypergraph $H$ is triangle-free if in $H$ there are no three vertices $a,b,c$ for which $\{a,b\},\{a,c\},\{b,c\}$ are hyperedges (of size $2$) in $H$.}. 
\end{theorem}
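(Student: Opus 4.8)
The plan is to prove Theorem~\ref{thm:mainhg} as a direct translation of Theorem~\ref{thm:main} through the bijection $T \mapsto H_T$ described above, so that no new combinatorial content is needed beyond a careful dictionary between the two formulations. Recall that under this bijection the vertices of $H_T$ are the triples of $T$, and a proper two-coloring of $H_T$ (no monochromatic hyperedge) is exactly a nice two-coloring of $T$. Since the map is a bijection onto all multi-hypergraphs on $n$ vertices and $m$ hyperedges with every degree equal to $m-3$, it suffices to show that under it the two hypergraph conditions --- every hyperedge has size at least $2$, and triangle-freeness --- correspond precisely to $T$ being fair and non-special, respectively; the conclusion then follows by applying Theorem~\ref{thm:main} to $T$. (As in Theorem~\ref{thm:main}, the statement is understood in the regime $n \ge 6$, which is where the bijection transports the characterization.)

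First I would dispatch the size condition, which is immediate. For each $i \in [m]$ the hyperedge $e_i$ consists exactly of the vertices whose triple avoids $i$, so $|e_i| = n - |T_i|$ equals the number of triples not in $T_i$. Hence ``$|e_i| \ge 2$ for every $i$'' is literally the defining condition of $T$ being fair.

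The substance of the argument, and the step I expect to be the main obstacle, is the equivalence between $T$ being special and $H_T$ containing a triangle. For the easy direction, if $T$ is special --- $n-3$ copies of $123$ together with triples $1{*}{*},2{*}{*},3{*}{*}$ --- then writing $a,b,c$ for the vertices of $1{*}{*},2{*}{*},3{*}{*}$, a direct check gives $e_1=\{b,c\}$, $e_2=\{a,c\}$, $e_3=\{a,b\}$ (every copy of $123$ and the ``matching'' starred triple contains the index, while the other two starred triples avoid it), so $\{a,b\},\{a,c\},\{b,c\}$ are size-$2$ hyperedges and $\{a,b,c\}$ is a triangle. For the converse I would start from a triangle on vertices $a,b,c$ realized by size-$2$ hyperedges $e_i=\{a,b\}$, $e_j=\{a,c\}$, $e_k=\{b,c\}$ with $i,j,k$ distinct, and read off the membership pattern: $e_i=\{a,b\}$ forces every triple except $a,b$ to contain $i$, and symmetrically for $j,k$. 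Combining the three patterns shows that each of the $n-3$ vertices outside $\{a,b,c\}$ contains all of $i,j,k$ and is therefore the triple $ijk$, while $a$ contains $k$ but avoids $i,j$, and symmetrically for $b,c$; relabeling $i,j,k$ as $1,2,3$ this is exactly the special form.

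The care needed in the converse is purely bookkeeping --- tracking which of $a,b,c$ lies in which edge, and checking that the two starred entries of each such triple cannot coincide with $i,j,k$ (they avoid $i,j$ by the edge pattern and differ from $k$ since a triple is a set) --- but no genuinely hard idea is required. Once this dictionary is in place, the ``if and only if'' of Theorem~\ref{thm:mainhg} is precisely the ``if and only if'' of Theorem~\ref{thm:main} read through the bijection.
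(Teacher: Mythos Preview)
Your proposal is correct and follows exactly the approach the paper takes: the paper states Theorem~\ref{thm:mainhg} as the translation of Theorem~\ref{thm:main} through the bijection $T\mapsto H_T$, leaving the dictionary (fair $\leftrightarrow$ all edges of size $\ge 2$, special $\leftrightarrow$ contains a triangle, nice two-coloring $\leftrightarrow$ proper two-coloring) essentially implicit. You have simply spelled out that dictionary carefully, including the correct observation that the equivalence inherits the $n\ge 6$ hypothesis from Theorem~\ref{thm:main}.
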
 

Notice that these conditions are trivially necessary, as the existence of a hyperedge of size at most $1$ or the existence of a triangle immediately prevents the hypergraph from admitting a proper coloring. Theorem \ref{thm:main} implies that these simple conditions are also sufficient. Theorem \ref{thm:linearck} can be also translated to the language of hypergraphs:

\begin{theorem}\label{thm:linckhg}
	For any fixed $c,k$, given a multi-hypergraph $H$ with $n$ vertices and $m$ hyperedges such that every vertex has degree $m-k$, there is an $O(n)$ time algorithm\footnote{We assume that the multi-hypergraph is stored such that for each vertex the $k$ hyperedges \emph{not} containing this vertex is given.} to check if $H$ admits a polychromatic $c$-coloring which also finds one if it exists.
\end{theorem}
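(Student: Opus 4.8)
The plan is to reduce Theorem~\ref{thm:linckhg} directly to the tuple formulation in Theorem~\ref{thm:linearck} rather than reprove anything from scratch. First I would observe that the stated bijection $T\to H_T$ used for triples generalizes verbatim: a multi-hypergraph $H$ on $n$ vertices with $m$ hyperedges in which every vertex has degree $m-k$ corresponds bijectively to a set of $n$ many $k$-tuples on the alphabet $[m]$, where the $k$-tuple associated to a vertex $v$ is exactly the set of the $k$ hyperedge-indices \emph{not} containing $v$. The assumed storage format (for each vertex, the list of the $k$ hyperedges missing it) is precisely the natural encoding of this $k$-tuple, so the translation is an $O(n)$ (indeed $O(kn)$, hence $O(n)$ for fixed $k$) relabeling and costs nothing asymptotically.

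Next I would verify that the coloring notions match up under this bijection. A $c$-coloring of the vertices of $H$ is exactly a $c$-coloring of the $k$-tuples. For a fixed color and a fixed hyperedge $e_i$ (corresponding to element $i\in[m]$), the hyperedge $e_i$ contains a vertex of that color if and only if some $k$-tuple of that color avoids $i$. Hence $H$ admits a polychromatic $c$-coloring exactly when the corresponding set of $k$-tuples admits a nice $c$-coloring. This is the only conceptual step, and it is the same correspondence already spelled out for $(c,k)=(2,3)$ in the paragraph preceding Theorem~\ref{thm:mainhg}; here one simply notes it holds for general $c$ and $k$ with ``proper'' replaced by ``polychromatic.''

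With both the objects and the property in bijective correspondence, I would invoke Theorem~\ref{thm:linearck}: run its $O(n)$ algorithm on the set of $k$-tuples read off from $H$, and translate the output back. If the algorithm reports that no nice $c$-coloring exists, report that $H$ has no polychromatic $c$-coloring; if it returns a nice $c$-coloring, return the induced vertex coloring of $H$. Correctness is immediate from the equivalence above, and the total running time is $O(n)$ since the encoding conversion and the decoding of the final coloring are each linear in $n$ for fixed $k$.

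I do not anticipate a genuine obstacle here, since the theorem is essentially a restatement of Theorem~\ref{thm:linearck} in hypergraph language. The only point requiring care is confirming that the input/output conversions are genuinely linear under the stated storage assumption rather than hidden costs; but because each vertex is described by exactly $k$ hyperedge labels and $k$ is fixed, reading the input, building the $k$-tuples, and emitting the coloring are all $O(n)$, so the linear-time guarantee is preserved.
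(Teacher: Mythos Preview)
Your proposal is correct and matches the paper's treatment: the paper does not give a separate proof of Theorem~\ref{thm:linckhg} but presents it explicitly as the translation of Theorem~\ref{thm:linearck} into hypergraph language via the bijection $T\to H_T$, which is exactly the reduction you spell out. Your only addition is making explicit that the bijection and the coloring correspondence, stated in the paper for $k=3$ and proper $2$-colorings, carry over verbatim to general $k$ and polychromatic $c$-colorings, and that the storage assumption guarantees the conversion is $O(n)$; these are precisely the details the paper leaves implicit.
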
 

In general it is well known that proper two-colorability of a hypergraph is $NP$-complete \cite{lovasz,garey}. In contrast to this, Theorem \ref{thm:mainhg} states that there is even a simple characterization of those hypergraphs in which the degree of every vertex is $3$ less than the number of hyperedges and that are proper two-colorable. Furthermore, for every $c$ and $k$, if in a hypergraph the degree of every vertex is exactly $k$ less than the number of hyperedges, Theorem \ref{thm:linckhg} gives a linear time algorithm (in terms of the number of vertices) to decide if the hypergraph admits a polychromatic $c$-coloring (which is equivalent to a proper $2$-coloring when $c=2$) and also finds the coloring when it exists.

\section{Proof of the characterization} \label{sec:proof}
The proof of Theorem \ref{thm:main} is based on the following two lemmas.

\begin{lemma}\label{lem:tight}
	If in a fair set of non-special triples there are at least three numbers that appear exactly $n-2$ times then the set admits a nice two-coloring.
\end{lemma}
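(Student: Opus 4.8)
The plan is to exploit that three tight numbers force the multiset to consist almost entirely of copies of a single triple; throughout we are in the range $n\ge6$ of \tref{main}. Call the three numbers that appear $n-2$ times $1,2,3$, and for each $i$ let $A_i$ be the set of triples avoiding $i$ (the hyperedge $e_i$ of $H_T$); a two-coloring is nice exactly when no $A_i$ is monochromatic. By hypothesis $|A_1|=|A_2|=|A_3|=2$. Since a triple containing all of $1,2,3$ must equal $123$, and since $|A_1\cup A_2\cup A_3|\le|A_1|+|A_2|+|A_3|=6$, at least $n-6$ of the triples are copies of $123$. Let $D$ be the set of these copies and $N$ the remaining at most six triples. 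Then $A_1,A_2,A_3\subseteq N$, every triple of $N$ lies in some $A_i$ (it omits at least one of $1,2,3$), and $D\subseteq A_j$ for every $j\ge4$. Hence it suffices to (i) color $N$ so that $A_1,A_2,A_3$ are bichromatic and (ii) color $D$ so that every $A_j$ with $j\ge4$ is bichromatic.

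For (i) I would form the graph $G$ on vertex set $N$ whose at most three edges are the pairs $A_1,A_2,A_3$; a proper two-coloring of $G$ is exactly a coloring of $N$ making these pairs bichromatic. As $G$ has at most three edges, its only possible odd cycle is a triangle, so $G$ is bipartite unless these edges form a triangle on three vertices $a,b,c$. The structural heart of the argument is to verify that this happens if and only if the set is special: from $A_1=\{a,b\}$, $A_2=\{b,c\}$, $A_3=\{a,c\}$ one reads that $a$ avoids $1$ and $3$ and hence equals $2**$, similarly $b=3**$ and $c=1**$; since every triple of $N$ lies in some $A_i$ the triangle exhausts $N$, so $N=\{a,b,c\}$ and $T$ consists of $1**,2**,3**$ with $n-3$ copies of $123$, i.e.\ $T$ is special, and the converse is immediate. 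Thus non-specialness forces $G$ bipartite, and fixing any proper two-coloring $\chi$ of $G$ settles (i).

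Part (ii) is immediate once $|D|\ge2$, in particular whenever $n\ge8$: color $D$ with both colors, so that each $A_j$ with $j\ge4$ contains two oppositely colored copies of $123$, and combine with $\chi$ to obtain a nice two-coloring. The main obstacle is the case $|D|\le1$, which by $|A_1\cup A_2\cup A_3|\le6$ forces $n\le7$, hence $n\in\{6,7\}$; here the copies of $123$ no longer settle the constraints for $j\ge4$ by themselves. I would finish by a short direct analysis of these finitely many shapes: writing $\alpha,\beta,\gamma$ for the numbers of triples of $N$ avoiding exactly one, two, three of $1,2,3$, the identity $\alpha+2\beta+3\gamma=6$ leaves only a handful of possibilities for $N$, and in each I would use the freedom among the proper two-colorings of $G$ together with the color of the at most one copy of $123$, invoking fairness---each $j\ge4$ is avoided at least twice---to make every $A_j$ non-monochromatic. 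Checking that fairness and non-specialness always permit such a choice in these small cases is the delicate point, and it is exactly there that the $n=4,5$ examples break down.
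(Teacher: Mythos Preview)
Your strategy is genuinely different from the paper's. The paper strips off exactly $n-6$ copies of $123$ and then runs a direct case analysis on the six remaining triples, splitting according to the sizes $|T_i\cap T_j|$ for $i,j\in\{1,2,3\}$; in several subcases it exhibits an explicit coloring, and in others (when some $*$-element occurs five or six times among the six) it reinserts one or two of the removed $123$'s and colors seven or eight triples by hand. Your route is more structural: you separate all non-$123$ triples into $N$, note that the three two-element sets $A_1,A_2,A_3$ form a graph $G$ on $N$ with at most three edges, and observe that $G$ is bipartite unless these edges form a triangle---which you cleanly identify with the special configuration. This equivalence is a genuine insight not present in the paper, and for $|D|\ge 2$ (hence for all $n\ge 8$, and for many instances with $n=6,7$ too) it yields a case-free proof: two-color $N$ properly for $G$, put both colors on $D$, done.

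The gap is precisely where you flag it. For $|D|\le 1$ you give only a plan---enumerate shapes of $N$ via $\alpha+2\beta+3\gamma=6$ and invoke fairness---but do not execute it, and this part carries real content. Once $|D|\le 1$ the elements $j\ge 4$ impose constraints on the coloring of $N$ itself, so an arbitrary proper two-coloring of $G$ need not be nice; you must argue that among the proper two-colorings of $G$ (and the choice of color for the lone $123$, if any) one satisfies all the $A_j$. The paper's Case~2 and the first half of Case~3 are exactly these situations, and there the paper has to branch on whether some $*$ appears five or six times and, if so, pull extra $123$'s back in. So your proposal is correct as far as it goes, and the graph picture is an improvement for the generic regime, but the residual finite check you defer for $n\in\{6,7\}$ is essentially of the same size as the paper's entire case analysis; the structural reduction does not let you bypass it.
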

\begin{proof}
	There are three numbers, wlog. the numbers $1,2,3$, that appear $n-2$ times. This implies that there are $n-6$ triples of the form $123$, removing these triples we get a set $T$ of $6$ triples in which the numbers $1,2,3$ appear exactly $4$ times. If we can find a nice two-coloring of these $6$ triples than an arbitrary extension of this coloring to the original set gives a nice two-coloring of the original set. We have that $|T_1\cap T_2|,|T_1\cap T_3|,|T_2\cap T_3|\ge 2$.
	
	\begin{enumerate}
		\item There exist two numbers $i,j$ out of $1,2,3$, for which $T_i= T_j$.
		
		Then wlog. there are three cases, listed in Table \ref{tab:4a}.
		\begin{table}[h]
			\centering
			\begin{tabular}{lllllllllllllllllll}
				1 & 2 & 3 &  &  &  &  &  & 1 & 2 & 3 &  &  &  &  &  & 1 & 2 & 3 \\
				1 & 2 & 3 &  &  &  &  &  & 1 & 2 & 3 &  &  &  &  &  & 1 & 2 & 3 \\
				1 & 2 & 3 &  &  &  &  &  & 1 & 2 & 3 &  &  &  &  &  & 1 & 2 & * \\
				1 & 2 & 3 &  &  &  &  &  & 1 & 2 & * &  &  &  &  &  & 1 & 2 & * \\
				* & * & * &  &  &  &  &  & * & * & 3 &  &  &  &  &  & * & * & 3 \\
				* & * & * &  &  &  &  &  & * & * & * &  &  &  &  &  & * & * & 3
			\end{tabular}
			\caption{Case 1.}
			\label{tab:4a}
		\end{table}
		
		In all three cases we color the first, the third and the last triple red and the rest of the triples blue to get a nice coloring.
		
		
		\item $|T_1\cap T_2|=|T_1\cap T_3|=|T_2\cap T_3|=2$.
		
		\begin{table}[h]
			\centering
			\begin{tabular}{lll}
				1 & 2 & * \\
				1 & 2 & * \\
				1 & * & 3 \\
				1 & * & 3 \\
				* & 2 & 3 \\
				* & 2 & 3
			\end{tabular}
			\caption{Case 2.}
			\label{tab:4b}
		\end{table}
		
		For this case see Table \ref{tab:4b}. 
		If there is no number that appears more than $4$ times then it is easy to see that there exists a nice two-coloring of these $6$ sets. If there exists an $i$ that appears $5$ times, then in the original (fair) set there was an additional triple $123$, in which case it is easy to see that there exists a nice two-coloring of these $7$ sets. Finally, if there exists an $i$ that appears $6$ times, then in the original (fair) set there were two additional triples $123$, and then it is again easy to see that there exists a nice two-coloring of these $8$ sets.
						
		\item $2\le |T_1\cap T_2|,|T_1\cap T_3|,|T_2\cap T_2|\le 3$ and not all of $|T_1\cap T_2|,|T_1\cap T_3|,|T_2\cap T_2|$ are equal. 
		
		This implies that wlog. $|T_1\cap T_2|=2$ while $|T_1\cap T_3|=3$. Then wlog. there are two cases, listed in Table \ref{tab:4c}.
		
		\begin{table}[h]
			\centering
			\begin{tabular}{lllllllllll}
				1 & * & 3 &  &  &  &  &  & 1 & * & 3 \\
				1 & * & 3 &  &  &  &  &  & 1 & * & * \\
				1 & 2 & 3 &  &  &  &  &  & 1 & 2 & 3 \\
				1 & 2 & * &  &  &  &  &  & 1 & 2 & 3 \\
				* & 2 & 3 &  &  &  &  &  & * & 2 & 3 \\
				* & 2 & * &  &  &  &  &  & * & 2 & *
			\end{tabular}
			\caption{Case 3.}
			\label{tab:4c}
		\end{table}
		
		In the first case if there is no number that appears more than $4$ times then it is easy to see that there exists a nice two-coloring of these $6$ sets. If there exists an $i$ that appears $5$ times, then in the original (fair) set there was an additional triple $123$, and then it is easy to see that there exists a nice two-coloring of these $7$ sets. Finally, no number can appear $6$ times as the third triple is $123$.
		
		In the second case we color the first, third and the last triple red and the rest of the triples blue to get a nice coloring.
		
		\item $|T_1\cap T_2|=|T_1\cap T_3|=|T_2\cap T_3|=3$.
		
		Then wlog. there are two cases, listed in Table \ref{tab:4d}.
		
		\begin{table}[h]
			\centering
			\begin{tabular}{lllllllllll}
				1 & 2 & 3 &  &  &  &  &  & 1 & 2 & 3 \\
				1 & 2 & 3 &  &  &  &  &  & 1 & 2 & 3 \\
				1 & 2 & 3 &  &  &  &  &  & 1 & 2 & * \\
				1 & * & * &  &  &  &  &  & 1 & * & 3 \\
				* & 2 & * &  &  &  &  &  & * & 2 & 3 \\
				* & * & 3 &  &  &  &  &  & * & * & *
			\end{tabular}
			\caption{Case 4.}
			\label{tab:4d}
		\end{table}
		
		In the first case we have a special set of triples, a contradiction. In the second case we color the first and last triple red and the rest of the triples blue to get a nice coloring.
	\end{enumerate}

\end{proof}
\begin{lemma}\label{lem:six}
A fair non-special set of $6$ triples admits a nice two-coloring.
\end{lemma}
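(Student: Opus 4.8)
The plan is to pass to the hypergraph formulation and exploit that there are only six vertices. As observed just before \tref{mainhg}, a two-coloring is nice exactly when, for every element $i$, the set $A_i$ of triples that \emph{avoid} $i$ receives both colors, and fairness says precisely that $|A_i|\ge 2$ for every $i$. So my goal is to two-color the six triples so that no $A_i$ is monochromatic. The genuinely restrictive constraints are the sets $A_i$ of size exactly two, i.e. those coming from elements of degree four; these span a graph $G$ on the six triples (an edge $\{x,y\}$ for each degree-four element avoided by exactly $x$ and $y$), and the non-special hypothesis says exactly that $G$ is triangle-free.

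First I would prove a structural fact about $G$: \emph{every triple $v$ is non-incident to at most three edges of $G$}. Indeed, each edge not incident to $v$ is a pair avoiding a degree-four element that $v$ must therefore contain, and these elements are distinct, so there are at most $|v|=3$ of them. Applied to the vertex lying outside a putative $5$-cycle (which would be non-incident to all five of its edges), this immediately excludes $5$-cycles; since the only odd cycles on at most six vertices have length $3$ or $5$, and triangles are forbidden by non-specialness, it follows that $G$ is bipartite and hence properly two-colorable. Summing the inequality $E-\deg_G(v)\le 3$ over all six vertices also yields $4E\le 18$, so $E\le 4$, where $E$ denotes the number of degree-four elements (equivalently, edges of $G$); recall $\sum_i d_i=18$.

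The remaining and main task is to select, among the proper two-colorings of $G$, one that in addition makes every larger set $A_i$ (of size $\ge 3$) bichromatic; \textbf{this simultaneous breaking of all the ``big'' hyperedges is the chief obstacle}. I would dispose of it by splitting on $E$. If $E\ge 3$, then at least three elements appear exactly $n-2=4$ times, so \lref{tight} applies verbatim and we are done. If $E\le 2$, then $G$ has at most two edges, so no three triples are forced to receive a common color (the largest $G$-forced monochromatic block has size at most two) and at least two triples are $G$-isolated, hence freely colorable. Moreover the number of size-three sets $A_i$ is at most $18/3=6$, strictly below the Fano threshold of $7$ edges required to obstruct proper two-colorability of a $3$-uniform hypergraph, and all other $A_i$ have size $\ge 4$ and are easy to break. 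Starting from a proper two-coloring of $G$ and adjusting the free triples, a short finite check then breaks every remaining $A_i$ at once; since at least one $A_i$ is bichromatic, both colors are used. This produces the desired nice two-coloring.

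I expect the only delicate point to be the $E\le 2$ endgame: establishing simultaneity rigorously may still require organizing the few remaining configurations by how the (at most two) edges of $G$ sit relative to the size-three sets $A_i$. The structural facts above (bipartiteness, $E\le 4$, no forced block of size three, abundant free vertices, and the sub-Fano count $\le 6$) are exactly what keep this final case analysis short rather than open-ended.
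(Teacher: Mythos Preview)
Your translation to the graph $G$ of size-two constraints is sound, and the structural facts you prove (each vertex non-incident to at most three $G$-edges, hence $E\le 4$ and no $5$-cycle) are correct. The case $E\ge 3$ is indeed exactly \lref{tight}. The gap is entirely in the $E\le 2$ endgame, which you yourself flag as the delicate point.

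The Fano-threshold remark does not do the work you assign it. The result $m(3)=7$ says a \emph{pure} $3$-uniform hypergraph with at most six edges is properly two-colorable; here you must simultaneously respect up to two $2$-edges (from $G$) and possibly several hyperedges $A_i$ of size $\ge 4$, and nothing in the Fano bound promises a common two-coloring for this mixed system. ``Adjusting the free triples'' is precisely the step that needs an argument, and you have not given one. (Incidentally, for $E\le 2$ the graph $G$ is trivially a forest, so your bipartiteness conclusion, while true, is never actually invoked in the proof.)

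The paper closes this gap by a direct count that replaces the Fano heuristic: restrict attention to the ten \emph{balanced} $3$--$3$ colorings. In any such coloring every $A_i$ with $|A_i|\ge 4$ is automatically bichromatic; each size-$3$ set $A_i$ kills exactly one balanced coloring; and a degree-four element kills four. Bounding the number of size-$3$ sets by $\lfloor(18-4E)/3\rfloor$ and tallying leaves at least one surviving balanced coloring when $E=0$ or $E=1$, while $E=2$ needs a short further split (on whether the two $G$-edges coincide, share a vertex, or are disjoint --- equivalently on $|T_1\cap T_2|$). This is exactly the ``organizing the few remaining configurations'' your sketch defers; once you carry it out, your outline becomes a complete proof along the same case split $E=0,1,2,\ge 3$ as the paper's.
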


\begin{proof}
	We again split the problem into a few cases.
	
	The $6$ triples of the set together contain $18$ numbers (with multiplicities). As the set is fair, every number appears at most $4$ times.
	We distinguish cases based on how many numbers appear exactly $4$ times.
	
	\begin{enumerate}
		\item
		No number appears $4$ times. In this case there are at most $18/3=6$ numbers that appear $3$ times. We consider only two-colorings where both color classes contain $3-3$ triples (we call such colorings \emph{balanced}) and prove that at least one of them is nice. There are $10$ such colorings (we do not distinguish pairs of colorings with switched colors). A number that appears $3$ times makes exactly one balanced coloring not nice, thus there are at least $10-6=4$ nice two-colorings.
		\item
		Exactly one number, wlog. the number $1$, appears $4$ times. We again consider only the $10$ balanced two-colorings and prove that at least one of them is nice. There are at most $\lfloor (18-4)/3\rfloor=4$ numbers which appear $3$ times, each of them makes one coloring not nice. Number $1$ makes $4$ colorings not nice, thus altogether there are at least $10-4-4=2$ nice balanced two-colorings.
		\item
		Exactly two numbers, wlog. $1$ and $2$, appear $4$ times. 
		
		In this case $|T_1\cap T_2|\ge 2$, and there are at most $\lfloor 18-2\cdot 4\rfloor=3$ numbers which appear in exactly three triples. We distinguish some subcases:
		\begin{enumerate}
			\item $|T_1\cap T_2|=4$, i.e., $T_1=T_2$. 
			
			Again we consider only the balanced two-colorings. $1$ and $2$ both make the same four of them not nice while the at most $3$ numbers which appear in exactly three triples make $3$ of them not nice, so still there are at least $10-4-3=3$ nice balanced two-colorings.
			
			\item $|T_1\cap T_2|=3$.
			
			In this case we have triples $12*,12*,12*,1**,2**,***$ where $*$ are numbers different from $1,2$.
			Again we consider only the balanced two-colorings. The numbers $1$ and $2$ together make $7$ of them not nice while the at most $3$ numbers which appear in exactly three triples make $3$ of them not nice. Assume first that there is some coincidence among these not nice balanced colorings, then there is at least $10-7-3+1=1$ nice balanced two-coloring. 
			
			Now assume that all these $10$ not nice balanced colorings are different, then there are numbers, wlog. $3,4,5$ such that $|T_3|=|T_4|=|T_5|=3$ and $|T_i\cap T_j|=2$ for every $i=1,2$ and $j=3,4,5$. This implies $|T_1\cap T_2\cap T_j|\ge 1$ for $j=3,4,5$.
			Then the first three triples must be $123,124,125$. To have $|T_1\cap T_j|=2$ for $j=3,4,5$ we need that the fourth triple containing $1$ contains all of $3,4,5$, a contradiction.
			
			\item $|T_1\cap T_2|=2$.
			
			Again we consider only the balanced two-colorings. The numbers $1$ and $2$ together make $6$ of them not nice while the at most $3$ numbers which appear in exactly three triples make $3$ of them not nice, so still there are at least $10-6-3=1$ nice balanced two-colorings.
		\end{enumerate}

	\item At least three numbers, wlog. the numbers $1,2,3$, appear $4$ times.
	
	This case follows from Lemma \ref{lem:tight}.
\end{enumerate}

\end{proof}

We introduce one more notation and then we are ready to prove Theorem \ref{thm:main}.

\begin{definition}
	Let $T$ be a set of $n$ triples (of positive integers). $T$ is \emph{reducible} if we can delete a triple from it such that the remaining set of triples is fair, otherwise it is \emph{irreducible}.
\end{definition}

Note that a reducible set of triples is by definition necessarily fair. 

\begin{proof}[Proof of Theorem \ref{thm:main}]
	We have seen earlier that the conditions are necessary, so we want to prove that they are also sufficient. That is, we want to find a nice two-coloring of a fair non-special set $T$ of $n\ge 6$ triples.
	
	If $T$ is reducible then we delete one of the triples such that the remaining set is still fair. We keep doing this until we get an irreducible set $T'$ or a set $T'$ with exactly $6$ triples. 
	
	\begin{enumerate}
		\item $T'$ is non-special. 
	
	If $T'$ has $n'=6$ triples then by Lemma \ref{lem:six} we get a nice two-coloring of $T'$. Otherwise $T'$ is irreducible. 
	
	If $T'$ is irreducible, deleting an arbitrary triple $t$ makes the set not fair, thus there is a number (wlog. the number $1$) which appears $n-2$ times (and does not appear in $t$). Next, deleting a triple $t'$ which contains $1$ would make the set not fair, thus there is a number which appears $n-2$ times and does not appear in $t$, thus this number is different from $1$, wlog. $2$. Finally, as $n\ge 6$, there is a triple $t''$ in which $1$ and $2$ both appear. Deleting $t''$ would also make the set not fair thus there is a number different from $1$ and $2$, wlog. $3$, which also appears $n-2$ times. Thus, there are three numbers that appear $n-2$ times in the fair set of triples $T'$, so by Lemma \ref{lem:tight} we get a nice two-coloring of $T'$.
	
	In both cases, the nice two-coloring of $T'$ can be extended arbitrarily to a nice two-coloring of $T$.
	
	\item $T'$ is special. 
	
	$T'$ is then a special fair set of $n'\ge 6$ triples. Wlog. $T'$ consists of $n'-3\ge 3$ triples of the form $123$ and three triples, $t_1=1**,t_2=2**,t_3=3**$ (where $*$ denote arbitrary numbers different from $1,2,3$).
	Now we are interested in the triples that were deleted during the process. Recall that $T$ was a non-special set, thus we must have deleted at least one triple $t$ which is not of the form $123$, thus $t$ avoids at least one of $1,2,3$. Assume wlog. that $t$ avoids $1$. Color $t, t_1$ and one triple $123$ with color red. Color the rest of the triples (including $t_2,t_3$ and another triple $123$) blue, it is easy to check that this coloring is nice, as required.
\end{enumerate}	
\end{proof}

We mention that in Theorem \ref{thm:main} we use Lemma \ref{lem:six} only on irreducible sets.

\section{Algorithms and partial colorings}\label{sec:partial}

For general $c$ and $k$, if a nice (partial) $c$-coloring exists of $k$-tuples, then in each color class we can choose at most $k+1$ triples such that coloring these at most $c(k+1)$ $k$-tuples (the rest of the triples can remain uncolored) already has the property of a nice partial $c$-coloring. Indeed, for each color we can choose an arbitrary $k$-tuple of that color, then using that the coloring is nice, we can choose at most $k$ other $k$-tuples of that color avoiding each element in this $k$-tuple, together these at most $c$ times $k+1$ many $k$-tuples are as required. 

\begin{observation}\label{obs:partialize}
	If a nice (partial) $c$-coloring exists of a set of $k$-tuples then there is also a nice partial $c$-coloring of the $k$-tuples which uses all colors at most $k+1$ times and the original coloring is an extension of this coloring. Moreover, from each color class of the original $c$-coloring we can fix one $k$-tuple which remains colored in the new nice partial $c$-coloring (with the same color as in the original coloring). Such a nice partial $c$-coloring can be found easily in linear time if the nice (full) $c$-coloring is given.
\end{observation}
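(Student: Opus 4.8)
The plan is to turn the witness-selection idea sketched just before the statement into a clean per-color argument, whose whole point is that a single tuple already handles almost all elements. Fix one of the $c$ colors, say color $r$, and choose an arbitrary $r$-colored $k$-tuple $t$; such a tuple exists because the given coloring is nice. I designate $t$ as the representative that will remain colored. The crucial observation is that $t$ consists of exactly $k$ elements, so for \emph{every} element $i\in[m]$ that is not one of these $k$ elements, $t$ itself avoids $i$. Thus a single tuple already witnesses the avoidance condition for all $m-k$ elements lying outside $t$, and only the $k$ elements of $t$ still require a witness.

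For each of these $k$ elements, say $a$, the original nice $c$-coloring guarantees an $r$-colored tuple avoiding $a$, and I keep one such tuple for each $a$. Together with $t$ this gives a set $S_r$ of at most $k+1$ tuples of color $r$, and by construction every element of $[m]$ is avoided by some tuple of $S_r$: each element outside $t$ by $t$ itself, and each element $a$ of $t$ by its chosen witness. Note that this step only ever selects among tuples that are \emph{already} colored $r$ in the given coloring, so the argument is insensitive to whether that coloring is full or partial.

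Performing this for all $c$ colors and keeping only the tuples in $\bigcup_r S_r$ colored, each with its original color and with every other tuple left uncolored, yields a partial $c$-coloring that uses each color at most $k+1$ times and is nice by the paragraph above; the given coloring obviously extends it, and the designated tuples $t$ are the promised fixed representatives. For the linear-time claim I would observe that building $S_r$ needs only one pass over the colored tuples of color $r$: keeping, for each of the $k$ elements of $t$, a flag recording whether a witness has been found, each scanned tuple is tested against the still-unwitnessed elements in $O(k^2)$ time, so over all tuples this is $O(k^2 n)=O(n)$ for fixed $c,k$.

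There is no real difficulty to overcome here; the content lies entirely in the opening observation that one tuple avoids all but $k$ of the elements for free. This is exactly what bounds the number of retained tuples per color by $k+1$ rather than by a quantity growing with $m$ or $n$, and everything else is routine bookkeeping.
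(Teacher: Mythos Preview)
Your proof is correct and follows essentially the same approach as the paper: pick an arbitrary tuple $t$ per color (which already avoids every element outside $t$), then use niceness to pick at most $k$ further tuples of that color avoiding the $k$ elements of $t$. You have just spelled out more carefully than the paper the linear-time scan and the fact that the original coloring extends the resulting partial one, but the idea is identical.
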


From these using Theorem \ref{thm:main} we get the following:

\begin{corollary}\label{cor:mainpartial}
	Given a set of $n\ge 6$ triples, a nice partial $2$-coloring that colors at most $4$ triples with each of the two colors exists if and only if the set of triples is fair and non-special.
\end{corollary}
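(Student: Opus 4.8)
The plan is to obtain the corollary by combining the characterization of \tref{main} with the quantitative refinement in \oref{partialize}, together with the elementary fact recalled in the introduction that any nice partial two-coloring extends to a nice (full) two-coloring simply by coloring the remaining triples arbitrarily. Both directions of the equivalence will then follow without any new combinatorial work.

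For the implication that fairness and non-speciality are sufficient, I would start from a fair, non-special set of $n\ge 6$ triples. Here \tref{main} immediately yields a nice (full) two-coloring of the set. Feeding this coloring into \oref{partialize} with the parameters $c=2$ and $k=3$ produces a nice partial two-coloring that uses each color at most $k+1=4$ times. Since this is precisely a nice partial two-coloring coloring at most $4$ triples with each color, this direction is complete.

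For the converse, suppose that some nice partial two-coloring colors at most $4$ triples with each color. Disregarding the bound, it is in particular a nice partial two-coloring, hence extends to a nice two-coloring of the whole set by coloring the uncolored triples arbitrarily. The necessity direction of \tref{main} (equivalently, the observation in \sref{main} that fairness and non-speciality are necessary for every $n\ge 4$, and so in particular for $n\ge 6$) then forces the set to be fair and non-special.

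I expect no genuine obstacle: the mathematical content is carried entirely by \tref{main} and \oref{partialize}, and the one point deserving care is the parameter matching, where the specific bound $4$ enters only through the equality $k+1=4$ for triples. The corollary is thus a direct assembly of the two earlier results, with \tref{main} supplying existence and the characterization and \oref{partialize} supplying the numerical bound on the number of colored triples per class.
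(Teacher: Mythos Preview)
Your proposal is correct and matches the paper's own reasoning exactly: the paper simply states that the corollary follows from \tref{main} together with \oref{partialize}, which is precisely the combination you spell out, including the extension of a nice partial coloring to a full one for the converse direction.
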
 

\oref{partialize} implies that there is a $O(n^{c(k+1)})$ time algorithm to check for a set of $n$ $k$-tuples if a nice $c$-coloring exists and find one if it exists. Indeed, it is enough to check the $O(n^{c(k+1)})$ many partial colorings that color $k+1$ $k$-tuples with each color whether any of them is a nice partial $c$-coloring (and if yes, extend it arbitrarily to a nice $c$-coloring). Note that checking any one of these colorings whether it is nice can be done in constant time (dependent on $c$ and $k$). 

In case $c=1$ we have seen that a set of $k$-tuples has a nice $1$-coloring if and only if it is $1$-fair which can be easily checked in linear time in $n$. If it is $1$-fair then coloring all $k$-tuples with the unique color is a nice coloring. Also, we can easily find in linear time in $n$ a subset of at most $k+1$ many $k$-tuples such that coloring only these is a nice partial $1$-coloring.

In case $c=2$ and $k=3$ the above argument gives that in time $O(n^8)$ we can check if a nice $2$-coloring exists of a set of triples and if yes then also find one. For this case we can improve considerably this naive algorithm. Checking that a set of $n$ triples is fair and non-special can be done easily in linear time in $n$. Indeed, being special is very easy to check while testing if a set of triples is $2$-fair, one can choose two arbitrary triples, and only check if the elements present in these two triples are avoided by at least two other triples, as these two triples both avoid all other elements.

This and Theorem \ref{thm:main} implies that there is a linear time algorithm to check if a nice $2$-coloring of a set of triples exists. This does not immediately give an algorithm to also find such a coloring. Next we show how the characterization leads to a linear time algorithm for also finding a nice $2$-coloring when it exists.

\begin{claim}\label{claim:linear}
	Given a set of $n$ triples, there is an $O(n)$ time algorithm to check if a nice $2$-coloring exists and find one if it exists.
\end{claim}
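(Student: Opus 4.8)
The plan is to turn the proof of \tref{main} into a constructive, carefully-budgeted algorithm. First I would check in linear time whether the set $T$ is fair and non-special; by the remarks just before the claim, both checks take $O(n)$ time (to test fairness, fix two triples and verify that each of the at most $6$ elements appearing in them is avoided by at least two triples, since these two triples already avoid every other element). If $T$ fails either test, we output ``no nice two-coloring'' and stop. Otherwise $T$ is fair and non-special with $n\ge 6$, so by \tref{main} a nice two-coloring exists and the task is to produce one within the linear budget. The key realization is that the existence proof is already almost an algorithm: it reduces $T$ (by deleting triples while preserving fairness) down to a core $T'$ that is either a special set, an irreducible set, or a set of exactly $6$ triples, colors that core by hand, and then extends the coloring arbitrarily to all of $T$. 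By \oref{partialize} (equivalently \cref{mainpartial}) the coloring of the core only needs to fix a constant number of triples per color, so almost all of $T$ is colored arbitrarily; the entire cost is in locating a constant-size ``witness'' structure.

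The main work is to implement the reduction and core-coloring without the naive repeated-deletion loop, which could cost $\Omega(n^2)$ if done literally. First I would compute, in one linear pass, the multiplicity of each element, i.e. how many triples contain it; with these counts in hand I can in $O(1)$ amortized time determine for any candidate deletion whether removing that triple keeps the set fair (a deletion is safe unless it drops some element's avoidance count below the threshold, which can only happen for the at most three elements of the deleted triple). I would then identify which of the three branches of the \tref{main} proof applies. If some element already appears $n-2$ times, I look for a second and then a third such element exactly as in the irreducible case, using the argument that after forcing elements $1$ and $2$ to be tight, the triple $t''$ containing both of them forces a third tight element $3$; this locates the three ``$n-2$ times'' numbers in $O(n)$ and I invoke the explicit coloring from \lref{tight}. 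If instead $T'$ collapses to a special-looking remainder, I use the non-specialness of $T$ to find (in one pass) a deleted or present triple avoiding one of the three distinguished elements, and color as in the special branch of the \tref{main} proof. The remaining case is handled by reducing to exactly six triples and coloring them via the finite case analysis of \lref{six}.

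The step I expect to be the genuine obstacle is carrying out the reduction to an irreducible (or six-triple) core in linear \emph{total} time rather than per-deletion time, and organizing the case split so that each configuration from \lref{tight} and \lref{six} is detected by a constant number of linear scans. The delicate point is that after deletions the element multiplicities change, so I must maintain the counts incrementally and be careful that ``deleting a triple keeps fairness'' is checked only against the constantly many elements whose counts the deletion affects. Once the correct constant-size core and its hand-coloring are identified, extending to all of $T$ is a single linear pass that assigns every uncolored triple an arbitrary color, and verifying niceness of the final coloring (if desired as a safeguard) is again linear by the same fix-two-triples argument used for the fairness test. Thus every phase—checking fairness and non-specialness, finding the witness structure, coloring the core, and extending—runs in $O(n)$, giving the claimed linear-time algorithm.
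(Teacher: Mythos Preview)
Your approach differs substantially from the paper's. The paper does not attempt to implement the reduction procedure from the proof of \tref{main} at all. Instead, it directly constructs a constant-size fair non-special subset $T'$ of $T$ as follows: pick two arbitrary triples $e,f$; for each of the at most six elements appearing in $e\cup f$, find two triples avoiding that element (one linear scan); this yields a set $T'$ of at most $14$ triples which is automatically fair, since every element not in $e\cup f$ is avoided by both $e$ and $f$. If $T'$ happens to be special, one more linear scan finds a triple of $T$ different from the unique repeated triple of $T'$, and adding it makes $T'$ non-special while keeping it fair. Then one brute-forces all two-colorings of the at most $15$ triples of $T'$, finds a nice one (guaranteed by \tref{main}), and extends arbitrarily to $T$. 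There is no reduction loop, no incremental count maintenance, and no case split over \lref{tight} versus \lref{six}.

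Your plan can probably be made to work but is considerably more intricate, and the sketch contains a concrete error. You write that a deletion ``is safe unless it drops some element's avoidance count below the threshold, which can only happen for the at most three elements of the deleted triple.'' This is backwards. When you delete a triple $t$, for each $i\in t$ both $n$ and $|T_i|$ drop by one, so the avoidance count $n-|T_i|$ is unchanged; it is precisely the elements \emph{not} in $t$ whose avoidance counts drop by one. Hence a deletion of $t$ is safe iff every currently tight element (one with avoidance count exactly $2$) lies \emph{in} $t$. Maintaining safety is therefore not a local check on the three elements of $t$ but a check against the global set of tight elements, and you still owe an explanation of how to maintain that set and how to locate, in each phase, a triple containing all current tight elements, within the overall $O(n)$ budget. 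This can be done (tight elements only accumulate under safe deletions, there are at most three of them before you invoke \lref{tight}, and each phase transition can trigger a single linear scan to build the relevant list of candidate triples), but it is exactly the bookkeeping the paper's direct construction sidesteps.
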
 

\begin{proof}
For $n\le 5$ we can check every $2$-coloring in constant time. Given a set $T$ of $n\ge 6$ triples, checking if a set of triples is fair and non-special can be easily done in linear time. If these conditions hold, then we know that there exists a nice $2$-coloring (and otherwise it does not). Assuming that the set of triples has both of these properties, our aim is to find a constant size subset of the triples which already has both of the properties.

In order to do that, take two arbitrary triples, $e$ and $f$. As the set is fair, for each element appearing on $e$ or $f$, in linear time we can find two triples that avoid this element. Altogether $e$ and $f$ has at most $6$ different elements, and thus we find at most $12$ triples which together with $e$ and $f$ form the set $T'$ (with size at most $14$). We can also assume that $T'$ has at least $6$ triples as otherwise we add to it arbitrarily some further triples so that this holds. We claim that $T'$ is fair. Indeed, by our construction for each element in $e$ or $f$ there are at least two triples in $T'$ which avoid it, while for every other element both $e$ and $f$ avoid that element. Now we check if $T'$ is non-special, which can be checked in constant time. If yes, we are done. On the other hand, if $T'$ is special then there is a unique triple $g$ that occurs at least $3$ times in $T'$, this can be identified in constant time. As $T$ is not special, in linear time we can find an additional triple from $T\setminus T'$ which is different from $g$, adding this to $T'$ makes it non-special (and it remains to be fair).

Finally, having found a constant size (at most $15$) subset $T'$ which is fair and non-special, we can check in constant time all its two-colorings to find one which is nice. This is also a nice partial two-coloring of $T$, which can be extended arbitrarily (in linear time) to a nice two-coloring of $T$.

Altogether the algorithm takes $O(n)$ time, as required.
\end{proof}

In fact there is a linear time algorithm for every $c,k$. Note that for general $c,k$ we do not have a characterization and so the algorithm is based only on the fact that it is enough to find a small partial coloring, this is stated by Theorem \ref{thm:linearck}.

\begin{proof}[Proof of Theorem \ref{thm:linearck}]
	We fix some $c$ and $k$ which are considered to be constants and we are given a set $T$ of $n$ many $k$-tuples. The proof idea is to reduce the size of the problem, that is, we will create a constant size set $R$ of $k$-tuples such that $T$ admits a nice $c$-coloring if and only if $R$ does, moreover, given a nice $c$-coloring of $R$, we can find a nice partial $c$-coloring of $T$ in constant time.
	
	Fix an arbitrary subset $S$ of $s=(k+1)(c-1)+1$ many $k$-tuples. If a nice $c$-coloring of $T$ exists then by \oref{partialize} also a nice partial $c$-coloring exists which colors at most $k+1$ sets with each color. In this partial $c$-coloring for some integer $i$ ($0\le i\le c$) we have that among the $k$-tuples in $S$ there are at least $i$ colors present and also there are at least $c-i$ uncolored sets in $S$. We can easily extend this partial coloring to a coloring such that all colors are present on $S$ (for each color missing on $S$ we color one uncolored $k$-tuple of $S$ with this color). Summarizing, if there exists a nice $c$-coloring then there exists also a nice $c$-coloring such that all colors appear on $S$.
	
	Notice that $s$, the size of $S$, is a constant. We make a list $E'$ of the at most $ks$ elements that appear in the sets of $S$. From now on during the algorithm whenever we see an element not in $E'$, we replace it with a dummy element $*$ (it can happen that a $k$-tuple now contains several $*$'s but it will cause no problems). By this our alphabet is essentially reduced to size at most $ks+1$ (the elements in $E'$ plus $*$), and we get the set of $k$-tuples $T_*$ on this alphabet. Note that the $k$-tuples of $T_*$ are in a natural bijection with $k$-tuples of $T$, which, given a (partial) coloring of $T_*$, defines a partial coloring of $T$.
	
	\begin{lemma}
		A nice partial $c$-coloring of $T_*$ is also a nice partial $c$-coloring of $T$. On the other hand, if $T$ admits a nice partial $c$-coloring then $T_*$ also admits a nice partial $c$-coloring.
	\end{lemma}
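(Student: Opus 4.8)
The plan is to prove the two implications separately, in each case transporting the coloring across the natural bijection between the $k$-tuples of $T$ and those of $T_*$ and then verifying niceness element by element. The one point that needs care throughout is the dummy element $*$, whose behaviour is deliberately asymmetric: a $k$-tuple of $T_*$ avoids $*$ precisely when all of its original elements lie in $E'$, so avoiding $*$ is the strongest possible avoidance statement, simultaneously covering every original element outside $E'$.

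For the forward direction I would start from a nice partial $c$-coloring $\chi_*$ of $T_*$ and let $\chi$ be the corresponding partial coloring of $T$. To see that $\chi$ is nice, fix a color and an element $i$ of the alphabet of $T$. If $i\in E'$, then $i$ is never replaced, so a $k$-tuple of $T$ contains $i$ exactly when its image in $T_*$ does; applying niceness of $\chi_*$ at $i$ (which lies in the alphabet of $T_*$, since $i\in E'$ appears in $S\subseteq T$ and is left unchanged by the reduction) produces a $k$-tuple of the given color avoiding $i$. If instead $i\notin E'$, then $i$ is replaced by $*$, so $*$ lies in the alphabet of $T_*$, and niceness of $\chi_*$ at $*$ yields a $k$-tuple of the given color avoiding $*$; by the observation above, this $k$-tuple has all its original elements in $E'$ and hence avoids $i$. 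Thus $\chi$ is nice.

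For the converse, suppose $T$ admits a nice partial $c$-coloring. Extending it arbitrarily gives a nice (full) $c$-coloring, and by the discussion preceding the lemma (based on \oref{partialize}) we may assume it is a nice $c$-coloring $\chi$ in which all $c$ colors already appear on $S$. Let $\chi_*$ be the corresponding coloring of $T_*$; I would check that it is nice. For $j\in E'$ the argument is the symmetric version of the one above: a $k$-tuple of $T$ of a given color avoiding $j$ maps to a $k$-tuple of $T_*$ of the same color avoiding $j$. For the element $*$ I would instead invoke the property that all colors appear on $S$: for each color pick a $k$-tuple of $S$ of that color, and since every element of such a $k$-tuple lies in $E'$, its image avoids $*$ in $T_*$. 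Hence $\chi_*$ is a nice (in particular nice partial) $c$-coloring of $T_*$.

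The main obstacle is organizing the role of $*$ correctly rather than any hard calculation. In the forward direction the key is recognizing that a single witness avoiding $*$ settles all elements outside $E'$ at once; in the backward direction the delicate point is securing, for each color, a $k$-tuple avoiding $*$, and this is exactly why $S$ was chosen with $|S|=(k+1)(c-1)+1$, so that every color can be forced to appear on $S$ whose $k$-tuples use only elements of $E'$. Once these two facts about $*$ are in place, the verifications for elements of $E'$ are immediate, since such elements are untouched by the reduction.
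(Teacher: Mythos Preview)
Your proof is correct and follows essentially the same approach as the paper's. You are more explicit than the paper in the forward direction (the paper just notes that ``merging elements makes the task harder''), but the structure and the handling of the dummy element $*$ in the backward direction---using that all colors can be assumed to appear on $S$---match the paper's argument exactly.
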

	
	\begin{proof}
		Clearly, by definition of a nice coloring, if we can find a nice $c$-coloring of $T_*$, then the same coloring is also a nice $c$-coloring of the original set of $k$-tuples (indeed, merging elements just makes our task harder). 
		
		On the other hand we have seen that if $T$ admits a nice partial $c$-coloring then it also admits one in which on $S$ all colors appear. We claim that this coloring is also a nice partial $c$-coloring of $T_*$. The property of a nice coloring requires for each color and each element that there is a $k$-tuple with this color avoiding this element. As we did not merge elements in $E'$, this remains true for every element in $E'$ and every color. Also, it is true for $*$ and every color because for each color any $k$-tuple in $S$ with this color avoids $*$, as required.
	\end{proof}
	
	This lemma shows that it is enough to find a nice partial $c$-coloring of $T_*$. If it does not exist, then $T$ does not have a nice partial $c$-coloring. On the other hand, if it exists, then it is also a nice partial $c$-coloring of $T$. Thus, from now on we restrict our attention to $T_*$.
	
	Observe that in a nice partial $c$-coloring, if some $k$-tuples contain the same elements and get the same color, then by uncoloring all but one of them we still get a nice partial $c$-coloring.
	
	From constant many elements (that is, $ks+1$) there are only constant many different $k$-tuples that can be generated. We go through the set $T_*$ of $k$-tuples one-by-one and if we already kept $c$ copies of the pending $k$-tuple, then we throw it away, otherwise we keep it. This process can be done in $O(n)$ time, at the end we are left with a set $R$ of constant many $k$-tuples, as each different $k$-tuple generated from the $ks+1$ elements has multiplicity at most $c$. By our previous observation, if $T_*$ has a nice partial $c$-coloring then $R$ also has one, as in each color class every type of $k$-tuples needs to be used at most once, and so in all colors together at most $c$ times.
	
	Summarizing, as we promised at the beginning of the proof, we have defined a constant size set $R$ of $k$-tuples which admits a nice (partial) $c$-coloring if and only if $T_*$ does which by the lemma is further equivalent with $T$ admitting a nice (partial) $c$-coloring. Moreover, if such a coloring exists of $R$ then the same coloring is nice for $T_*$ and by the lemma also for $T$.
	
	As $R$ has constant size, we can brute force check in constant time if it admits a nice $c$-coloring and if it does then we can use that coloring to get a nice partial $c$-coloring of $T$ (which can be easily extended to a $c$-coloring of $T$ in linear time).
	
	Altogether the algorithm takes $O(n)$ time, as required.
\end{proof}

As we stated earlier, we can easily uncolor (in linear time) some $k$-tuples in a nice $c$-coloring such that we get a nice partial $c$-coloring in which all colors are used at most $k+1$ times. Claim \ref{claim:linear} thus implies the following:

\begin{corollary}\label{cor:linearpartialck}
	For any fixed $c,k$, given a set of $n$ many $k$-tuples, there is an $O(n)$ time algorithm to check if a nice partial $c$-coloring exists which uses every color at most $k+1$ times, and which finds one if it exists (the dependence on $c$ and $k$ is hidden in the $O$ notation).
\end{corollary}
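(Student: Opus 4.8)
The plan is to obtain the statement immediately by combining \tref{linearck} with \oref{partialize}, since the bounded nice partial coloring we seek is exactly what the uncoloring procedure of \oref{partialize} produces from a full nice coloring. So this is a genuine corollary rather than a fresh argument; the work is in chaining the two ingredients correctly.

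First I would record the basic equivalence, already noted in the introduction, that a set of $k$-tuples admits a nice partial $c$-coloring if and only if it admits a nice (full) $c$-coloring: any nice partial $c$-coloring extends to a nice $c$-coloring by coloring the remaining $k$-tuples arbitrarily, and a full nice coloring is in particular a nice partial one. Consequently, if no nice $c$-coloring exists then no nice partial $c$-coloring exists either, and a fortiori none using each color at most $k+1$ times.

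Next I would run the algorithm of \tref{linearck}, which in $O(n)$ time decides whether the given set $T$ of $n$ $k$-tuples admits a nice $c$-coloring and outputs one, call it $\chi$, whenever it exists. If the algorithm reports that no nice $c$-coloring exists, then by the equivalence above I report that no nice partial $c$-coloring using each color at most $k+1$ times exists, and halt.

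Finally, in the remaining case I have a full nice $c$-coloring $\chi$ in hand, and here I would invoke \oref{partialize}: from $\chi$ one can, in linear time, uncolor $k$-tuples so that what remains is a nice partial $c$-coloring in which every color is used at most $k+1$ times (keeping, for each color, one fixed $k$-tuple together with at most $k$ further ones of that color avoiding its elements). Outputting this partial coloring finishes the algorithm. The total running time is a linear-time call to \tref{linearck} followed by the linear-time uncoloring step of \oref{partialize}, hence $O(n)$. I do not expect a real obstacle; the only point requiring care is the partial-versus-full equivalence, which is precisely what lets the negative answer of \tref{linearck} certify the nonexistence of even a bounded nice partial $c$-coloring.
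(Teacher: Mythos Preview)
Your proposal is correct and follows essentially the same approach as the paper: combine the linear-time algorithm producing a full nice $c$-coloring with \oref{partialize} to uncolor down to at most $k+1$ tuples per color, using the partial/full equivalence to handle the negative case. In fact, you reference \tref{linearck} for general $c,k$, which is the right ingredient (the paper's lead-in text cites \clref{linear}, but that claim only covers $c=2$, $k=3$, so your citation is the more accurate one).
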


\section{A matching problem application}\label{sec:matching}
Here we discuss the real life problem that motivated our research, the matching problem it translates to and how these are connected to our results, as it was presented by Cechl\'arov\'a in the $9$th Emléktábla Workshop Booklet \cite{9EWbooklet} and communicated to me by Jankó \cite{perscomm}. It is about the International Young Physicists' Tournament (IYPT), sometimes referred to as `Physics World Cup', a team-oriented scientific competition between secondary school students. The real-world setup is slightly different from the model regarded here, we only restrict our attention to the model relevant for us.

We are given $n$ teams, each chooses in advance $3$ problems to his portfolio (out of a given set of $m$ problems). The teams need to be split into groups of $3$ or $4$ and in each group there are $3$ rounds, and in each round each team of the group presents a problem. It is required that no problem is presented twice within a group in the same round. We are interested in finding conditions and algorithms to see if such a grouping is possible.

In a group let us represent the teams and problems as the vertices of a bipartite graph, a problem is connected to a team if it is in its portfolio. In particular, every team has degree $3$. It is easy to see that the problem is equivalent to splitting the teams into groups of $3$ and $4$ and in each group splitting (in other words, coloring) the edges incident to the teams into $3$ matchings. By König's Line Coloring Theorem this can be done if and only if all degrees are at most $3$ in the subgraph of the edges incident to the teams of a given group. This trivially holds for the degrees of the teams, for the degrees of the problems this means that no problem is present in the portfolio of more than $3$ teams in the group.

In groups of size $3$ this trivially holds, thus only groups of size $4$ may cause an issue. If $n$ is divisible by $3$ then we do not need such groups, we only need that $n\ge 3$. If $n\equiv 1\mod 3$ then we need that $n\ge 4$ and there needs to be one group of size $4$, which is exactly the partial coloring problem for $c=1$, where the $m$ problems correspond to the elements of $[m]$, the $n$ teams to $n$ triples (a team corresponds to a triple containing the problems choosen by this team) and the unique group of size $4$ to the color class of a partial $1$-coloring. For that we have seen that the trivial necessary and sufficient condition is that the set of triples is $1$-fair. Finally, if $n\equiv 2\mod 3$ then we need $n\ge 8$ to be able to split $n$ into sets of size $3$ and $4$. In this case we need two groups of size $4$, which is exactly the partial coloring problem for $c=2$, where the $m$ problems correspond to the elements of $[m]$, the $n$ teams to $n$ triples and the groups of size $4$ to the two color classes of a partial $2$-coloring. Corollary \ref{cor:mainpartial} implies that the necessary and sufficient condition in this case is that the set of triples is $2$-fair and non-special (note that Corollary gives a coloring which uses both colors at most $4$ times, but this can easily be extended to a coloring which uses both colors exactly $4$ times). Thus, we have solved all cases, furthermore, checking the existence of and finding such a coloring can be done in linear time by Corollary \ref{cor:linearpartialck}.

\bigskip
\noindent \textbf{Acknowledgement}

The author is grateful to the organizers and participants of the 9th Emléktábla Workshop, in particular to Katar\'ina Cechlárová, Zsuzsanna Jankó and Nika Salia \cite{perscomm} for their helpful comments. The author also thanks an anonymous reviewer for his comments.


\begin{thebibliography}{99}

\bibitem{9EWbooklet}
9th Emléktábla Workshop Booklet, \url{https://www.renyi.hu/~emlektab/index.html}

\bibitem{JUBW} T. Januario, S. Urrutia, C. C. Ribeiro, and D. de Werra, Edge coloring: A natural model for sports scheduling, European Journal of Operational Research {\bf 254(1)} (2016), 1--8.

\bibitem{perscomm}
K. Cechlárová, Zs. Jankó, N. Salia, personal communication

\bibitem{LT} R. Lewis  and  J.Thompson, On the application of graph colouring techniques in round-robin sports scheduling, Computers \& Operations Research {\bf 38(1)} (2011), 190--204.

\bibitem{garey}
Michael R. Garey and David S. Johnson, Computers and Intractability: A Guide to the Theory of NP-Completeness, W. H. Freeman \& Co.(1979), ISBN 0716710447

\bibitem{lovasz}
László Lovász, Coverings and colorings of hypergraphs, Proc. 4th Southeastern Conf. on Comb., Utilitas Math. (1973), 3--12.
\end{thebibliography}
\end{document}